\documentclass[a4paper]{amsart}
\usepackage{graphicx}
\usepackage{amssymb} 
\usepackage{stmaryrd}
\usepackage[mathcal]{euscript}
\usepackage{tikz}
\usepackage{tikz-cd}
\usepackage{hyperref}
\hypersetup{%
  bookmarksnumbered=true,%
  colorlinks=true,%
  linkcolor=blue,%
  citecolor=blue,%
  filecolor=blue,%
  menucolor=blue,%
  urlcolor=blue,%
  bookmarksopen=true,%
  bookmarksdepth=2,%
  pageanchor=true}

\makeatletter
\@namedef{subjclassname@2020}{\textup{2020} Mathematics Subject Classification}
\makeatother

\title{Chase's lemma and its context}

\author{Henning Krause}
\address{Fakult\"at f\"ur Mathematik\\
Universit\"at Bielefeld\\ D-33501 Bielefeld\\ Germany}
\email{hkrause@math.uni-bielefeld.de}


\theoremstyle{plain}
\newtheorem{thm}{Theorem}

\newtheorem{lem}[thm]{Lemma}

\theoremstyle{definition}

\theoremstyle{remark}
\newtheorem{rem}[thm]{Remark}

\numberwithin{equation}{section}


\hyphenation{Grothen-dieck} 
\hyphenation{Auslan-der} 
\hyphenation{com-mu-ta-tive}
\hyphenation{uni-serial}
\hyphenation{ubi-qui-tous}


\newcommand{\Add}{\operatorname{Add}}

\newcommand{\card}{\operatorname{card}}

\newcommand{\fp}{\operatorname{fp}}
\newcommand{\Fp}{\operatorname{Fp}}

\newcommand{\Hom}{\operatorname{Hom}}

\newcommand{\Inj}{\operatorname{Inj}}

\newcommand{\Lex}{\operatorname{Lex}}

\renewcommand{\mod}{\operatorname{mod}}
\newcommand{\Mod}{\operatorname{Mod}}

\newcommand{\Prod}{\operatorname{Prod}}


\newcommand{\Ab}{\mathrm{Ab}}

\newcommand{\op}{\mathrm{op}}



\newcommand{\longiso}{\xrightarrow{\ \raisebox{-.4ex}[0ex][0ex]{$\scriptstyle{\sim}$}\ }}

\newcommand{\lto}{\longrightarrow}

\newcommand{\xto}{\xrightarrow}
\newcommand*{\intref}[2]{\def\tmp{#1}\ifx\tmp\empty\hyperref[#2]{\ref*{#2}}\else\hyperref[#2]{#1~\ref*{#2}}\fi}



\def\A{\mathcal A} 
 
\def\C{\mathcal C}
\def\D{\mathcal D}

\def\T{\mathcal T}

\def\bfP{\mathbf P}

\def\bbN{\mathbb N}

\def\a{\alpha}

\def\g{\gamma}
\def\p{\phi}

\def\k{\kappa}

\def\La{\Lambda}
\def\Si{\Sigma}

\begin{document}

\keywords{Chase's lemma, $\Si$-pure-injective module, locally noetherian Grothendieck category}

\subjclass[2020]{16D70 (primary); 18E10 (secondary)}

\begin{abstract}
  Chase's lemma provides a powerful tool for translating properties of
  (co)products in abelian categories into chain conditions. This note
  discusses the context in which the lemma is used, making explicit
  what is often neglected in the literature because of its technical
  nature.
\end{abstract}
  
\date{May 3, 2021}
\maketitle

\section{The context of Chase's lemma}
In this note we discuss a technical lemma due to Chase
\cite{Ch1960,Ch1962} which provides
a relation between direct products and direct sums of modules. This
lemma has several interesting consequences. Chase used this for the
study of products of projective modules, but it was then noticed by
Gruson and Jensen \cite{GJ1976} that it also applies to the study of
$\Si$-pure-injective modules. Recall that a module $X$ is
\emph{$\Si$-pure-injective} if any coproduct of copies of $X$
is pure-injective.

\begin{thm}[Gruson--Jensen]\label{th:modules}
  For a module $X$ over a ring the following conditions are equivalent:
\begin{enumerate}
  \item The module $X$ is $\Si$-pure-injective.
  \item There exists a cardinal $\k$ such that every product of copies
    of $X$ is a pure submodule of a coproduct of modules of
    cardinality at most $\k$.
  \item There exists a module $Y$ such that every product of copies of
    $X$ is a pure submodule of a coproduct of copies of $Y$.
    \end{enumerate}
  \end{thm}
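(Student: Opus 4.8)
The plan is to establish the cycle $(1)\Rightarrow(3)\Rightarrow(2)\Rightarrow(1)$, so that Chase's lemma is needed only for the final implication. The step $(3)\Rightarrow(2)$ is immediate: a coproduct of copies of $Y$ is a coproduct of modules each of cardinality $\card Y$, so the cardinal $\k=\card Y$ witnesses $(2)$.

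For $(1)\Rightarrow(3)$ I would take $Y=X$ and pass through the decomposition theory of $\Si$-pure-injective modules. The key preliminary point is that matrix subgroups commute with direct products, in the sense that $\phi(X^I)=\phi(X)^I$ for every matrix subgroup $\phi$; hence the descending chain condition on the matrix subgroups of $X$ is inherited by $X^I$, and, via the standard chain-condition characterisation of $\Si$-pure-injectivity, each product $X^I$ is again $\Si$-pure-injective. By the Krull--Remak--Schmidt--Azumaya theorem such a module is a coproduct of indecomposables with local endomorphism rings, and structure theory shows that, up to isomorphism, these indecomposable summands already occur as direct summands of $X$. Writing $X^I\cong\bigoplus_\lambda Z_\lambda$ with each $Z_\lambda$ a direct summand of $X$, I realise $X^I$ as a direct summand, hence a pure submodule, of a coproduct of copies of $X$. (Should one prefer not to identify the summands of $X^I$ with those of $X$, it suffices to let $Y$ be a coproduct of one representative of each indecomposable summand occurring in some power of $X$; these form a set, and the same argument applies.)

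The substance of the theorem is $(2)\Rightarrow(1)$, and here Chase's lemma does the work. The aim is to deduce from the bounded embedding hypothesis the descending chain condition on matrix subgroups of $X$, which is the chain-condition characterisation of $\Si$-pure-injectivity. Arguing by contradiction, suppose there is a strictly descending chain $H_1\supsetneq H_2\supsetneq\cdots$ of matrix subgroups, and select $a_n\in H_n\setminus H_{n+1}$. The plan is to encode this chain as a suitable element, or coherent family of elements, of a product $X^I$ whose successive membership in the $H_n$ is forced, and then to feed a pure embedding $X^I\hookrightarrow\bigoplus_\lambda C_\lambda$ with $\card C_\lambda\le\k$ into Chase's lemma: since every element of the coproduct has finite support among uniformly bounded summands, the lemma forces the chain to stabilise, contradicting strictness.

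The main obstacle is precisely this last argument -- translating a non-stabilising chain of matrix subgroups into an obstruction to a pure embedding into a bounded coproduct, and organising the support bookkeeping so that Chase's dichotomy applies. By contrast, the two supporting facts invoked above, that matrix subgroups commute with products and that $\Si$-pure-injectivity is equivalent to the descending chain condition on them, are standard and may be quoted from the theory.
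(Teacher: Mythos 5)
Your cycle is sensibly organised and $(3)\Rightarrow(2)$ is fine, but you have not proved the one implication that carries the content of the theorem. For $(2)\Rightarrow(1)$ you reduce, legitimately, to deducing the descending chain condition on matrix subgroups of $X$ from the bounded pure embedding, and then declare this deduction to be ``the main obstacle''. That obstacle \emph{is} the theorem: as the paper records, this is exactly the step whose complete published proof only appeared in \cite[Theorem~10]{HZ2000}, and a proposal that defers it is a statement of intent, not a proof. For the record, the missing argument runs parallel to the paper's proof of Theorem~\ref{th:loc-noeth}. A descending chain of matrix subgroups of $X$ has the form $X_{\gamma_n}$ for a sequence $\gamma=(C_n\to C_{n+1})_{n\in\bbN}$ of finitely presented modules with $C_0$ finitely generated free. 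Put $\lambda=\max(\aleph_0,\kappa)$ and apply Lemma~\ref{le:chase} to a pure monomorphism $\prod_{n\in\bbN}X^\lambda\to\coprod_{i\in I}C_i$ with $\card C_i\le\kappa$: one gets $m$ such that for almost all $i$ the image of $\big(\prod_{n\ge m}X^\lambda\big)_{\gamma_m}$ lands in $\bigcap_n(C_i)_{\gamma_n}\subseteq (C_i)_{\gamma_{m+1}}$. Since a pure monomorphism induces a monomorphism on each quotient of pp-functors (this is where purity enters, replacing the injectivity of $E$ used in the paper's Grothendieck-category proof), and since matrix subgroups commute with products and coproducts, one obtains an embedding of $\big(X_{\gamma_m}/X_{\gamma_{m+1}}\big)^\lambda$ into a finite coproduct of groups of cardinality at most $\kappa$; a counting argument as in Lemma~\ref{le:groups-card} (here $2^\lambda>\kappa$ suffices) forces $X_{\gamma_m}=X_{\gamma_{m+1}}$, contradicting strict descent. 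Until this bookkeeping is carried out, $(2)\Rightarrow(1)$ is open. (Quoting Zimmermann's equivalence of $\Si$-pure-injectivity with this chain condition \cite{Zi1977} is legitimate, since it is proved there independently; but note that in \cite{HZ2000} that equivalence and conditions (1), (2) are established together in one cycle, so you must be careful which source you lean on.)

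There is also an outright error in your primary route for $(1)\Rightarrow(3)$: the indecomposable direct summands of $X^I$ need \emph{not} occur, up to isomorphism, among the direct summands of $X$. Take $\La=\bbZ$ and $X=\bbQ/\bbZ$: this is injective, and $\bbZ$ is noetherian, so $X$ is $\Si$-injective, in particular $\Si$-pure-injective; yet $(\bbQ/\bbZ)^{\bbN}$ is divisible and not torsion, hence has $\bbQ$ as a direct summand, while $\bbQ$ is certainly not a summand of the torsion group $\bbQ/\bbZ$. Your parenthetical fallback is the correct statement and does repair the step: the indecomposable summands occurring in products $X^I$ form a set up to isomorphism (their cardinalities are bounded, for instance by $2^{\card\La+\aleph_0}$; alternatively they correspond to certain indecomposable injective objects of a Grothendieck category), so taking $Y$ to be a coproduct of representatives realises each $X^I$ as a direct summand, hence a pure submodule, of a coproduct of copies of $Y$. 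For calibration: the paper itself proves only the formal equivalence $(2)\Leftrightarrow(3)$ and defers $(1)\Leftrightarrow(2)$ to \cite{HZ2000}; what it does prove in full is the analogous Theorem~\ref{th:loc-noeth}, whose proof is precisely the template your missing step must follow.
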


  The equivalence (1) $\Leftrightarrow$ (2) is stated in
  \cite{GJ1976} and the proof is rather short; it says (2)
  $\Rightarrow$ (1) \emph{s'obtiennent par extension des m\'ethodes de
    \cite{Ch1960}}. Further equivalent conditions are formulated in
  \cite{GJ1976} and also studied in independent work by Zimmermann
  \cite{Zi1977} and Zimmermann-Huisgen \cite{ZH1979}. There are many
  references to this result in the literature, but it took more than
  20 years until a full proof was published by Huisgen-Zimmermann
  \cite[Theorem~10]{HZ2000} in a special volume devoted to infinite
  length modules \cite{KR2000}, using Chase's lemma.

  Condition (3) is actually useful in other categorical settings
  where no obvious notion of cardinality is available. Clearly, (2)
  and (3) are equivalent, because the isomorphism classes of modules
  of cardinality bounded by $\k$ form a set and we can take the
  coproduct of a set of representatives.
    
  Replacing elements of modules with morphisms, Chase's lemma can be
  formulated more generally for abelian categories, cf.\
  Lemma~\ref{le:chase}. Then one obtains as a consequence a
  characterisation of locally noetherian Grothendieck categories which
  is due to Roos \cite{Ro1968}. In particular, we see that
  \emph{properties of (co)products translate into chain conditions};
  this seems to be the real essence of Chase's lemma.

\begin{thm}[Roos]\label{th:loc-noeth}
  A locally finitely generated Grothendieck category is locally
  noetherian (so has a generating set of noetherian objects) if and
  only if there is an object $E$ such that every object is a subobject
  of a coproduct of copies of $E$.
\end{thm}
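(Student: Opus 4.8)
The plan is to prove the two implications separately, after recording the reduction that makes the statement tractable. Since $\A$ is locally finitely generated, it is locally noetherian if and only if every finitely generated object is noetherian: one direction is immediate, since the finitely generated generators that happen to be noetherian form the required generating set; for the other one uses that a finitely generated object is a quotient of a finite coproduct of the given noetherian generators, and that quotients and finite coproducts of noetherian objects are again noetherian. Thus I would read the right-hand condition of the theorem as a chain condition: every finitely generated object satisfies the ascending chain condition on subobjects. I also record the key consequence of local finite generation used throughout: any morphism from a finitely generated object into a coproduct factors through a finite subcoproduct, because its image is finitely generated and the finite subcoproducts form a directed family whose union is the whole coproduct.

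For the forward implication I would argue through the structure theory of injectives. In a locally noetherian Grothendieck category every object has an injective envelope, every injective object decomposes as a coproduct of indecomposable injective objects, and the indecomposable injectives form a set $\{E_j\}_{j\in J}$ up to isomorphism. Set $E=\coprod_{j\in J}E_j$. Given any object $X$, embed it in its injective envelope and decompose the latter as $\coprod_{t}E_{j_t}$ with each $j_t\in J$. Each summand $E_{j_t}$ is a coproduct summand of $E$, so $\coprod_t E_{j_t}$ is a summand, hence a subobject, of $\coprod_t E=E^{(T)}$. Composing, $X$ becomes a subobject of a coproduct of copies of $E$, as required.

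The backward implication is the hard part, and is where Chase's lemma (Lemma~\ref{le:chase}) enters; this is also the step whose technical nature the abstract warns about. Assume an object $E$ as in the statement and suppose, for contradiction, that some finitely generated object $G$ is not noetherian, so that there is a strictly ascending chain of subobjects $U_0\subsetneq U_1\subsetneq\cdots$ of $G$ with all quotients $G/U_n$ nonzero. I would feed the infinite chain into a product, forming $\prod_{n\in\bbN}G/U_n$ together with the various shifted diagonal morphisms out of $G$, and then embed this product into a coproduct $\coprod_{\lambda\in\Lambda}E$ of copies of $E$ using the hypothesis. Chase's lemma, applied to this morphism from a countable product into a coproduct, confines the image of a tail subproduct $\prod_{n\geq k}G/U_n$ to a finite subcoproduct $\coprod_{\lambda\in F}E$. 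Combined with the factorisation through finite subcoproducts furnished by local finite generation, this confinement should force the tail of the chain to stabilise, contradicting strictness.

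The main obstacle is precisely this last combination: extracting the contradiction from Chase's lemma. The difficulty is that neither the embedding of a product into a coproduct nor the finiteness of support obtained from Chase's lemma is, by itself, incompatible with a strictly increasing chain — an infinite coproduct of nonzero objects can perfectly well embed into a single object, so no cheap cardinality argument is available. What does the work is the interplay between the fixed object $E$, which must serve all objects at once, including arbitrarily large products, the finite-support conclusion of Chase's lemma, and the finite generation of $G$; assembling these into an actual contradiction is the delicate, and easily underestimated, heart of the argument. I would therefore spend most of the effort pinning down the exact morphism out of the product and the precise form of Chase's lemma needed, rather than on the comparatively formal forward implication.
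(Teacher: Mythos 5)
Your forward implication and the reduction to ``every finitely generated object is noetherian'' match the paper, but the backward implication has a genuine gap, and you concede it yourself: the step where Chase's lemma is supposed to ``force the tail of the chain to stabilise'' is exactly the heart of the proof, and your setup will not deliver it. Two things go wrong. First, you misread the conclusion of Lemma~\ref{le:chase}: it does \emph{not} confine the image of a tail subproduct to a finite subcoproduct (that image need not even be finitely generated). What it says is that, after precomposing with the finitely generated object $C$ via $\g_m$, the components into \emph{almost all} coproduct factors land in the intersection $\bigcap_n (Y_i)_{\g_n}$, with only finitely many unconstrained factors. Second, your product $\prod_n G/U_n$ is the wrong object to embed: running Chase on it gives no handle, because nothing in the data measures the failure of the chain to stabilise. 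Your instinct that ``no cheap cardinality argument is available'' is also misleading: the decisive step in the paper \emph{is} a cardinality argument, just performed on Hom-groups rather than on objects.

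Concretely, the paper proceeds as follows. Take the chain of finitely generated subobjects $B_0\subseteq B_1\subseteq\cdots$ of $C$, set $C_n=C/B_n$, so that $\g=(C_n\twoheadrightarrow C_{n+1})$ is the sequence fed into Lemma~\ref{le:chase}; replace $E$ by its injective envelope (a step you omit, and which is needed so that $0\to X_{\g_{n+1}}\to X_{\g_n}\to X_{\bar\g_n}\to 0$ is exact for $X$ a coproduct of copies of $E$, where $X_{\bar\g_n}=\Hom(B_{n+1}/B_n,X)$). The crucial amplification is the choice $\k=\max(\aleph_0,\card\Hom(C,E))$ and a monomorphism $\p\colon\prod_{n}E^\k\to\coprod_{i\in I}E$ furnished by the hypothesis. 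Applying $\Hom(C_m,-)$, Chase's lemma yields an embedding of $\prod_{n\ge m}(E_{\g_m})^\k$ into $(\coprod_{J}E_{\g_\infty})\amalg(\coprod_{\text{finite}}E_{\g_m})$; comparing the levels $m$ and $m+1$ and passing to quotients produces a monomorphism of a $\k$-fold power of the abelian group $A=E_{\bar\g_m}=\Hom(B_{m+1}/B_m,E)$ into a \emph{finite} power of it (the $E_{\g_\infty}$-part cancels). Now Lemma~\ref{le:groups-card} --- a monomorphism $A^\k\to A^n$ with $\k\ge\max(\aleph_0,\card A)$ forces $A=0$ --- gives $\Hom(B_{m+1}/B_m,E)=0$, hence $B_m=B_{m+1}$ since $E$ cogenerates. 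This interplay between the $\k$-fold power inside the product, the splitting of the almost-all part into $E_{\g_\infty}$, and the counting lemma is precisely the missing mechanism in your proposal; without it, the pieces you assembled (embedding, finite support, finite generation) are, as you correctly observe, mutually consistent with a strictly increasing chain.
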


For such a cogenerating object $E$ we have that every product of
copies of $E$ is a subobject of a coproduct of copies of $E$. Also, we
may assume that $E$ is injective, because one may replace $E$ with its
injective envelope. Then $E$ satisfies condition (3) in Theorem~\ref{th:modules}, and
this yields a first connection between the two theorems.

Getting back to work of Gruson and Jensen \cite{GJ1973} one knows that for any
ring $\La$ the fully faithful \emph{transfer functor}
\[T\colon \Mod\La\lto \Add(\mod(\La^\op),\Ab),\qquad X\mapsto
  X\otimes_\La-\] identifies pure-injective $\La$-modules with
injective objects in the category of additive functors
$\mod(\La^\op)\to\Ab$.  Here, $\Mod\La$ denotes the category of right
$\La$-modules and $\mod\La$ the full subcategory of finitely presented
modules. In particular, a $\La$-module $X$ is $\Si$-pure-injective if
and only if any coproduct of copies of $T(X)$ is injective.

In order to explain the relevance of Chase's lemma and the close
connection between the two theorems we adopt a more general approach,
following Crawley-Boevey \cite{CB1994}. We fix a locally finitely
presented additive category $\A$ and have a fully faithful functor
\[T\colon\A\lto\bfP(\A)\] into its \emph{purity category} $\bfP(\A)$
which is a locally finitely presented Grothendieck
category.\footnote{$\bfP(\A)=\Lex(\Fp(\fp\A,\Ab),\Ab)$, where $\fp\A$
  denotes the full subcategory of finitely presented objects in $\A$,
  $\Fp(\C,\Ab)$ denotes the abelian category of finitely presented
  functors $\C\to\Ab$, and $\Lex(\D,\Ab)$ denotes the category of
  additive functors $\D\to\Ab$ which send each short exact sequence in
  $\D$ to a left exact sequence in $\Ab$. The functor $T$ maps
  $X\in\fp\A$ to $\Hom(\Hom(X,-),-)$ and preserves filtered colimits.}
The functor $T$ preserves all (co)products and identifies
pure-injective objects in $\A$ with injective objects in $\bfP(\A)$
\cite[\S3]{CB1994}. For example, we can take $\A=\Mod\La$ for a ring
$\La$ and then the functor $\A\to\bfP(\A)$ identifies with the above
functor $X\mapsto X\otimes_\La-$.

Each object $X\in\A$ gives rise to a localising subcategory
$\C_X\subseteq \bfP(\A)$ that is generated by all finitely presented
objects $C\in\bfP(\A)$ satisfying $\Hom(C,T(X))=0$. We write $\Prod X$
for the full subcategory of products of copies of $X$ and their direct summands.

\begin{lem}
  An object $X\in\A$ is $\Si$-pure-injective if and only if the
  localised category $\bfP(\A)/\C_X$ is locally noetherian.  In this
  case $T$ induces an equivalence
\[\Prod X\longiso\Inj(\bfP(\A)/\C_X)\]
onto the full subcategory of injective objects in $\bfP(\A)/\C_X$.
\end{lem}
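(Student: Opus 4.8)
The plan is to push everything through the transfer functor $T$ and the localisation $Q\colon\bfP(\A)\to\bfP(\A)/\C_X$, reducing both assertions to properties of the single object $E:=T(X)$. Let $S$ be the right adjoint of $Q$, so that $QS\cong\id$ and the objects in the essential image of $S$ are exactly the $\C_X$-closed ones. First I would record the following. Since $T$ preserves coproducts and identifies pure-injective objects of $\A$ with injective objects of $\bfP(\A)$, the object $X$ is $\Si$-pure-injective precisely when every copower $E^{(\La)}$ is injective in $\bfP(\A)$. Moreover $E=T(X)$ is fp-injective (absolutely pure), so $\Hom(-,E)$ is exact on finitely presented objects and the class $\calS:=\{C\in\fp\bfP(\A):\Hom(C,E)=0\}$ is a Serre subcategory; hence $\C_X$, the localising subcategory it generates, is of finite type, every power and copower of $E$ is $\C_X$-torsionfree, and $S$ preserves coproducts. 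Finally $Q$ sends the finitely presented generators of $\bfP(\A)$ to a generating set of finitely generated objects, so $\bfP(\A)/\C_X$ is locally finitely generated and Roos's Theorem~\ref{th:loc-noeth} applies. For an injective $I$ the functor $\Hom(-,I)$ is exact, so $\{M:\Hom(M,I)=0\}$ is localising; therefore an injective object is $\C_X$-closed if and only if it is orthogonal to $\calS$, and the pair $(Q,S)$ restricts to mutually inverse equivalences between $\Inj(\bfP(\A)/\C_X)$ and the $\C_X$-closed injectives of $\bfP(\A)$. The task is thus to identify these closed injectives with $\Prod E$, which $T$ identifies with $\Prod X$.

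For the forward implication, $\Si$-pure-injectivity makes $X$, hence $E$, injective; together with fp-injectivity this forces $\C_X=\{M:\Hom(M,E)=0\}$, the torsion theory cogenerated by $E$, whose torsionfree objects are exactly the subobjects of powers of $E$. Since $SQE=E$ and $S$ preserves products, $(QE)^{\La}=Q(E^{\La})$, so $QE$ is an injective cogenerator; and any closed injective, embedding into some $E^{\La}$ and splitting off, lies in $\Prod E$. This gives the equivalence $\Prod X\xrightarrow{\,T\,}\Prod E\xrightarrow{\,Q\,}\Inj(\bfP(\A)/\C_X)$. For local noetherianity I would invoke Roos: condition (3) of Theorem~\ref{th:modules} provides a $Y$ with every power of $X$ purely embedded in a copower of $Y$, and transporting along $QT$ shows that every object of $\bfP(\A)/\C_X$ is a subobject of a coproduct of copies of $QT(Y)$, whence Theorem~\ref{th:loc-noeth} yields local noetherianity.

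The converse is the crux. Assuming $\bfP(\A)/\C_X$ locally noetherian, Roos's Theorem~\ref{th:loc-noeth} (after replacing the cogenerating object by its injective hull) supplies an injective $W$ with every object a subobject of a coproduct of copies of $W$; write $SW=T(Z)$ with $Z$ pure-injective. For any $\La$, the power $E^{\La}$ is fp-injective, hence torsionfree, so the unit $E^{\La}\to SQ(E^{\La})$ is monic; composing it with $S$ applied to an embedding $Q(E^{\La})\hookrightarrow W^{(I)}$, and using that $S$ preserves coproducts, produces a monomorphism $E^{\La}\hookrightarrow(SW)^{(I)}=T(Z^{(I)})$. Full faithfulness of $T$, which reflects purity, converts this into a pure embedding $X^{\La}\hookrightarrow Z^{(I)}$, i.e.\ condition (3) of Theorem~\ref{th:modules}, so $X$ is $\Si$-pure-injective. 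I expect the main obstacle to be precisely the bookkeeping of this converse: it rests on the two structural inputs that $\C_X$ is of finite type (so that $S$ commutes with coproducts) and that all powers of $T(X)$ are torsionfree (so that the units are monic), both flowing from the fp-injectivity of $T(X)$; and it is exactly here, in turning the noetherian chain condition into the (co)product inequality of Gruson--Jensen, that the abelian Chase's Lemma~\ref{le:chase} underlying Theorem~\ref{th:loc-noeth} does the real work.
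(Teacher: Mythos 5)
The paper itself does not prove this lemma---it defers to \cite[\S9]{Kr1998}---so your argument has to stand on its own, and there it has one genuine hole, located in the forward direction. Your scaffolding is sound and matches the cited route: fp-injectivity of $E=T(X)$ makes $\calS=\{C\in\fp\bfP(\A):\Hom(C,E)=0\}$ a Serre subcategory, so $\C_X$ is of finite type, the section functor $S$ preserves coproducts, powers and copowers of $E$ are torsionfree, and Gabriel's localisation theory identifies $\Inj(\bfP(\A)/\C_X)$ with the $\C_X$-torsionfree injectives of $\bfP(\A)$; your converse direction is, modulo the point below, essentially complete. The hole is the sentence ``together with fp-injectivity this forces $\C_X=\{M:\Hom(M,E)=0\}$''. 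Injectivity of $E$ (which already implies fp-injectivity, so the latter adds nothing) gives only the inclusion $\C_X\subseteq\{M:\Hom(M,E)=0\}$. The reverse inclusion asserts that the hereditary torsion theory cogenerated by the injective object $E$ is of finite type, and for a general injective this is simply false: every hereditary torsion theory on a Grothendieck category is cogenerated by some injective, including torsion theories that are not of finite type. What makes it true here is the full strength of $\Si$-pure-injectivity. Concretely, for a finitely generated torsionfree object $\bar C=C/K$ with $C$ finitely presented and $K=\bigcup_j K_j$ a filtered union of finitely presented subobjects, the subgroups $\Hom(C/K_j,E)\subseteq\Hom(C,E)$ form a filtered descending family with intersection $\Hom(\bar C,E)$, and to conclude from $\Hom(\bar C,E)=0$ that some $C/K_j$ lies in $\calS$ one needs the descending chain condition on exactly such subgroups---the Gruson--Jensen/Zimmermann characterisation of $\Si$-pure-injectivity, i.e.\ precisely the Chase-type content this lemma is meant to encapsulate. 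Without it, your claims that the torsionfree objects are the subobjects of powers of $E$, that $QE$ is an injective cogenerator, and hence the identification $\Inj(\bfP(\A)/\C_X)\simeq Q(\Prod E)$, are all unsupported.

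A second, related problem of scope: in both directions you quote Theorem~\ref{th:modules}---(1) $\Rightarrow$ (3) to get local noetherianity via Roos, and (3) $\Rightarrow$ (1) to close the converse---but the paper states that theorem only for modules over a ring, whereas the lemma concerns an arbitrary locally finitely presented $\A$. The analogues for general $\A$ are exactly what \cite[\S9]{Kr1998} proves, so as written your argument borrows its hardest steps from a generalisation you would still have to establish (e.g.\ by rerunning the argument of \cite{HZ2000} with the additive Chase Lemma~\ref{le:chase}). One further small slip: in your closing remark you attribute the real work in the converse to the Chase content of Theorem~\ref{th:loc-noeth}, but there you use only Roos's easy direction (locally noetherian $\Rightarrow$ injective cogenerator via coproducts); the Chase-powered inputs are Roos's hard direction in your forward step and Gruson--Jensen (3) $\Rightarrow$ (1) in your converse. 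The converse itself---unit monic on torsionfree powers of $E$, $S$ preserving coproducts by finite type, $SW=T(Z)$ with $Z$ pure-injective, and $T$ reflecting purity per \cite[\S3]{CB1994}---is correct and is a nice, clean reduction.
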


This lemma is useful because properties of $\Si$-pure-injective
objects (for example essentially unique decompositions into
indecomposable objects) can now be derived from a well developed
theory of injective objects in locally noetherian Grothendieck
categories \cite{Ga1962}.  For a proof we refer to
\cite[\S9]{Kr1998}, which combines the ideas from
\cite{CB1994,GJ1973,Si1978} with the localisation theory for
Grothendieck categories \cite{Ga1962}.

The above approach towards the study of pure-injectivity works equally
well for a compactly generated triangulated category $\T$ via the
\emph{restricted Yoneda functor}
\[\T\lto\Add((\T^c)^\op,\Ab),\qquad X\mapsto\Hom(-,X)\]
where $\T^c$ denotes the full subcategory of compact objects \cite{Kr2000}.

\section{Chase's lemma for additive categories}

In the context of modules over a ring, Chase's lemma goes back to an
argument in the proof of Theorem~3.1 in \cite{Ch1960}, though it is not
stated explicitly as a lemma.  In a subsequent paper \cite{Ch1962}
Chase formulated this as follows.

\begin{center}
  \includegraphics[width=.96\textwidth]{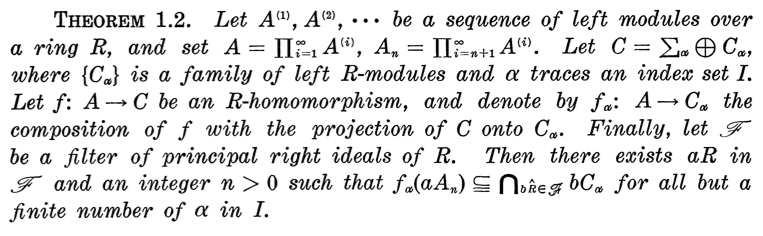}
\end{center}

We continue with a version of Chase's lemma for additive categories
which seems to be new.  For a sequence of morphisms
$\g=(C_{n}\rightarrow C_{n+1})_{n\in\bbN}$ we denote by
$\g_n\colon C_0\to C_n$ the composite of the first $n$ morphisms.  An
object $C$ is called \emph{finitely generated} if any morphism
$C\to \coprod_{i\in I}X_i$ factors through $\coprod_{i\in J}X_i$ for
some finite subset $J\subseteq I$.

\begin{lem}[Chase]\label{le:chase}
  Let $(X_n)_{n\in\bbN}$ and $(Y_i)_{i\in I}$ be families of objects
  in an additive category  and
  \[\p\colon \prod_{n\in\bbN}X_n\lto \coprod_{i\in I}Y_i\]
  a morphism. If $\g=(C_{n}\rightarrow C_{n+1})_{n\in\bbN}$ is
  a sequence of morphisms and $C=C_0$ is finitely generated, then
  there exists $m\in\bbN$ such that for almost all $j\in I$ each
  composite
\[C\xto{\ \g_m\ }C_m\xto{\ \theta\ }\prod_{n\in\bbN}X_n\xto{\ \p\ }\coprod_{i\in I}Y_i\twoheadrightarrow Y_j\]
with $\theta_n=0$ for $n<m$ factors through $\g_n\colon C\to C_n$ for all $n\in\bbN$.
\end{lem}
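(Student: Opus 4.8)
The plan is to argue by contradiction, using finite generation of $C$ twice: once to select ``fresh'' indices in $I$, and once at the very end to exhibit an impossible morphism with infinitely many nonzero components. So suppose that for every $m\in\bbN$ the set $B_m$ of indices $j\in I$ is infinite, where $j\in B_m$ means that some composite as in the statement, built from a $\theta$ with $\theta_n=0$ for $n<m$, fails to factor through $\g_n$ for some $n$. For a target $Y_j$ write $F_n^{(j)}=\{g\colon C\to Y_j \text{ factoring through } \g_n\}$; since $\g_{n+1}=(C_n\to C_{n+1})\circ\g_n$, these form a descending chain of subgroups of $\Hom(C,Y_j)$, and a sum of two maps lies outside $F_n^{(j)}$ as soon as exactly one of the summands does.

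First I would record two structural facts. (Fact A) If $u\colon C\to\prod_n X_n$ has $u_n=0$ for $n<m$ and each component $u_n$ with $n\ge m$ factors through $\g_m$, then $u$ itself factors through $\g_m$: one assembles the component factorisations $C_m\to X_n$ into a single map $C_m\to\prod_n X_n$ by the universal property of the product. In particular any ``tail'' of an assembled morphism factors through the corresponding stage of the tower. (Fact B) For a morphism $C\to\prod_n X_n$ with finite support, the composite with $\p$ into $\coprod_i Y_i$ has finite support, by finite generation of $C$; hence after building finitely many pieces we may always choose the next index $j$ outside a prescribed finite subset of $I$.

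The construction then proceeds by induction, producing a strictly increasing sequence $0=m_0<m_1<m_2<\cdots$, pairwise distinct indices $j_1,j_2,\dots\in I$, and finite-support morphisms $\beta_k\colon C\to\prod_n X_n$ with $\beta_k$ supported in $[m_{k-1},m_k)$ and factoring through $\g_{m_{k-1}}$. At step $k$ I invoke that $B_{m_{k-1}}$ is infinite to pick a bad index $j_k$ lying outside the finite support (Fact B) of $\p\circ(\beta_1+\cdots+\beta_{k-1})$ and outside $\{j_1,\dots,j_{k-1}\}$, together with a witnessing $\theta$; the block $\beta_k$ is the truncation of $\theta\circ\g_{m_{k-1}}$ to the interval $[m_{k-1},m_k)$, the endpoint $m_k$ being chosen (adaptively, and large) so that $\pi_{j_k}\p\beta_k$ does not factor through $\g_{m_k}$. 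Assembling all blocks by the product universal property yields a single $\Theta\colon C\to\prod_n X_n$. Splitting $\Theta$ at level $m_k$ into its finite part (blocks $1,\dots,k$) and its tail (blocks $>k$), the tail factors through $\g_{m_k}$ by Fact A, so its contribution at $j_k$ lies in $F_{m_k}^{(j_k)}$; the blocks $1,\dots,k-1$ contribute $0$ at $j_k$ by the choice of $j_k$; and $\beta_k$ contributes a map outside $F_{m_k}^{(j_k)}$. Hence $\pi_{j_k}\p\Theta\notin F_{m_k}^{(j_k)}$, in particular is nonzero, for infinitely many distinct $j_k$, contradicting that $\p\Theta\colon C\to\coprod_i Y_i$ has finite support.

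The main obstacle is precisely the adaptive choice highlighted above: arranging, at each step, that the badness of $j_k$ is \emph{localised in the current block}, i.e.\ that the finite truncation $\beta_k$ of the witness already fails to factor through $\g_{m_k}$, rather than the failure being carried entirely by the discarded tail of $\theta\circ\g_{m_{k-1}}$. Controlling this is the delicate combinatorial core inherited from Chase's original argument; I expect to handle it by letting the endpoints $m_k$ grow fast enough and by exploiting that bad witnesses are available at every level $m$, so that a tail-carried failure can be reabsorbed into a later block. Everything else, namely the two appeals to finite generation and the tower/product bookkeeping, should be routine once Facts A and B are in place.
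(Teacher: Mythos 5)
There is a genuine gap, and you have located it yourself but not closed it: the insistence that each block $\beta_k$ have \emph{finite} support, hence be a truncation of the witness, is exactly what breaks. Nothing in the standing assumptions lets you choose the endpoint $m_k$ so that the truncated block carries the failure to factor. Indeed, the hypotheses available at that point of the argument do not rule out that $\p$ vanishes on every morphism $C\to\prod_n X_n$ of finite support (for instance when $\p$ factors through the quotient of the product by the subcoproduct); in that situation $\pi_{j}\p\beta=0$ for \emph{every} finite truncation $\beta$ and every $j$, so no choice of endpoint, however large, localises the badness in the current block, while the witnesses themselves remain genuinely of infinite support. The suggested repair --- grow the endpoints fast and ``reabsorb'' a tail-carried failure into a later block --- has no mechanism behind it: the later levels supply bad indices and witnesses for \emph{other} $j\in I$, not for the tail you discarded. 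Since products do not behave like coproducts under $\p$ (this is the very phenomenon the lemma is about), one should not expect finitely supported approximations of a witness to remember its behaviour.

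The fix, which is the route the paper takes, is to drop the finite-support requirement on blocks altogether and to reverse the order of the adaptive choice. Take as $j$-th block the \emph{whole} witness $\theta_j\in(\prod_{n\ge n_j}X_n)_{\g_{n_j}}$, supported in $[n_j,\infty)$; since the left endpoints $n_j$ strictly increase, the sum $\theta=\sum_j\theta_j$ is still well defined componentwise (your Fact~A assembly), which is all the finiteness the construction actually needs. Then, \emph{after} fixing $\theta_j$ with $\p_{i_j}(\theta_j)\notin\bigcap_{n\ge 0}(Y_{i_j})_{\g_n}$, use that the chain $(Y_{i_j})_{\g_n}$ is descending to pick $n_{j+1}>n_j$ with $\p_{i_j}(\theta_j)\notin(Y_{i_j})_{\g_{n_{j+1}}}$; no truncation and no localisation of badness is ever required. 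With this modification your concluding split goes through verbatim and agrees with the paper's proof: at $i_j$ the earlier blocks contribute $0$ (fresh choice of $i_j$ via finite generation, your Fact~B), the tail $\sum_{k>j}\theta_k$ factors through $\g_{n_{j+1}}$ and so contributes inside $(Y_{i_j})_{\g_{n_{j+1}}}$, the term $\p_{i_j}(\theta_j)$ lies outside, hence $\p_{i_j}(\theta)\neq 0$ for infinitely many pairwise distinct $i_j$, contradicting that $\p\theta$ factors through a finite subcoproduct. Everything else in your outline --- the two appeals to finite generation, the filtration subgroups $F^{(j)}_n=(Y_j)_{\g_n}$, and the past/current/tail decomposition --- is sound and coincides with the paper's argument.
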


It is convenient to introduce further notation. For a morphism
$\g\colon C\to D$ and an object $X$ we denote by $X_\g$ the image of
the map
\[\Hom(D,X)\xto{\ -\circ\g\ }\Hom(C,X).\]
Then a sequence of morphisms
$\g=(C_{n}\rightarrow C_{n+1})_{n\in\bbN}$ yields a descending chain
\[\cdots\subseteq X_{\g_2}\subseteq X_{\g_1}\subseteq
  X_{\g_0}=\Hom(C_0,X).\]
We can now rephrase the statement of the lemma as follows.  \emph{There exists $m\in\bbN$ such that
  \[\p_i\big((\prod_{n\ge
      m}X_n)_{\g_m}\big)\subseteq\bigcap_{n\ge 0}(Y_i)_{\g_n}\] for
  almost all $i\in I$, where
  \[\p_i\colon\Hom(C,X)\xto{\ \p\circ -\ }\Hom(C,Y) \lto\Hom(C,Y_i).\]}

\begin{proof}
  We follow closely the proof of Theorem~1.2 in \cite{Ch1962}.  Assume
  the conclusion to be false. We set $X=\prod_{n\in\bbN}X_n$ and
  construct inductively sequences of elements $n_j\in\bbN$,
  $i_j\in I$, and $\theta_j\in\Hom(C,X)$ with $j\in\bbN$ and
  satisfying
  \begin{enumerate}
  \item  $n_{j+1}>n_j$,
    \item  $\theta_{j}\in (\prod_{n\ge n_{j}}X_n)_{\g_{n_j}}$,
\item $\p_{i_j}(\theta_j)\not\in (Y_{i_j})_{\g_{n_{j+1}}}$,
\item $\p_{i_j}(\theta_k)=0$ for $k<j$.
\end{enumerate}
We proceed as follows. Set $n_0=0$. Then there exists $i_0\in I$ such
that 
 \[\p_{i_0}(X_{\g_0})\not\subseteq\bigcap_{n\ge 0}(Y_{i_0})_{\g_n},\] 
 and hence we may select $\theta_0\in X_{\g_0}$ and $n_1>0$ such
 that $\p_{i_0}(\theta_0)\not\in (Y_{i_0})_{\g_{n_{1}}}$. Thus
conditions (1)--(4) are satisfied for $j=0$.

Proceeding by induction on $j$, assume that elements $n_{k+1}\in\bbN$, $i_k\in I$ and
$\theta_k\in\Hom(C,X)$ have been constructed for $k<j$ such that conditions
(1)--(4) are satisfied. Using that $C$ is finitely generated,  there exists a finite subset
$I'\subseteq I$ such that for $i\in I\setminus I'$ we have
$\p_i(\theta_k)=0$ for $k<j$. We may then select  $i_j\in I\setminus I'$
such that
  \[\p_{i_j}\big((\prod_{n\ge
      n_j}X_n)_{\g_{n_j}}\big)\not\subseteq\bigcap_{n\ge
      0}(Y_{i_j})_{\g_n},\] because otherwise the lemma would be true.
  Thus there exists $\theta_j\in (\prod_{n\ge n_{j}}X_n)_{\g_{n_j}}$
  and $n_{j+1}>n_j$ such that
  $\p_{i_j}(\theta_j)\not\in (Y_{i_j})_{\g_{n_{j+1}}}$. It is then
  clear that the elements $n_{k+1}\in\bbN$, $i_k\in I$, and
  $\theta_k\in\Hom(C,X)$ for $k\le j$ satisfy the conditions (1)--(4).
  
  Now let $\theta=\sum_{j\in\bbN}\theta_j\in\Hom(C,X)$, which is
  well-defined since the sum for each component $C\to X_n$ is
  finite. For each $j\in\bbN$ we have
  $\p_{i_j}(\theta)=\p_{i_j}(\theta_j)+\p_{i_j}(\sum_{k>j}\theta_k)\neq
  0$, since the second summand lies in $(Y_{i_j})_{\g_{n_{j+1}}}$,
  whereas the first does not. On the other hand, the morphism
  $\p\theta$ factors through a finite sum $\coprod_{i\in J} Y_i$ for
  some $J\subseteq I$, since $C$ is finitely generated. This
  contradiction finishes the proof.
\end{proof}

We include the application from \cite{Ch1960} about products of
projective modules. Note that the descending chain condition on principal
right ideals characterises rings that are left perfect \cite{Ba1960}.

\begin{center}
  \includegraphics[width=.95\textwidth]{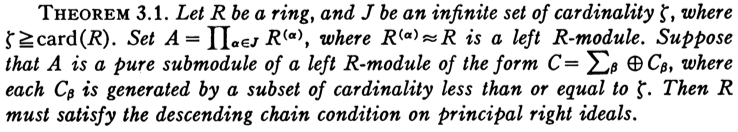}
\end{center}

\section{Coproducts of injective objects}

A motivation for Chase's study of products of projective modules in
\cite{Ch1960} was the fact that coproducts of injective modules are
again injective over a noetherian ring. In fact, this property for
right modules characterises right noetherian rings
\cite{Ma1958,Pa1959}. There are similar results for Grothendieck
categories, and this brings us back to Theorem~\ref{th:loc-noeth}. Roos
stated this theorem in \cite{Ro1968}, but again the proof is short:
\emph{La d\'emonstration du th\' eor\`eme~1 est analogue \`a celle du
  th\'eor\`eme~B de \cite{FW1969}}. For this reason it seems
appropriate to include a complete proof which is based on Chase's
lemma; it is different from that in \cite{FW1969}, though the authors
do refer to the work of Chase \cite{Ch1960}.

\begin{proof}[Proof of Theorem~\ref{th:loc-noeth}]
  Let $\A$ be a Grothendieck category and fix a generator $G$. When
  $\A$ is locally noetherian, then every injective object decomposes
  into a coproduct of indecomposable objects \cite[IV.2]{Ga1962}. Each
  indecomposable injective object arises as injective envelope
  $E(G/U)$ for some subobject $U\subseteq G$. The subobjects of any
  object in a Grothendieck form a set. Thus
  $E=\coprod_{U\subseteq G}E(G/U)$ has the property that every object
  of $\A$ is a subobject of a coproduct of copies of $E$, since $\A$
  admits injective envelopes.

  To prove the converse we need to assume that the Grothendieck
  category is \emph{locally finitely generated}, so it has a
  set of finitely generated generators.  Let $C\in\A$ be a finitely
  generated object.  We wish to show that $C$ is noetherian. To this
  end fix a chain of finitely generated subobjects
  $0=B_0\subseteq B_1\subseteq B_2\subseteq \cdots$ and set
  $C_n=C/B_n$. This yields a sequence of epimorphisms
  $\g=(C_{n}\twoheadrightarrow C_{n+1})_{n\in\bbN}$. For $X\in\A$ we
  set $X_{\bar\g_n}=\Hom(B_{n+1}/B_n,X)$ and obtain an exact sequence
\[0\lto X_{\g_{n+1}}\lto X_{\g_{n}}\lto X_{\bar\g_{n}}\lto 0\] provided that
$X$ is injective or a coproduct of injective objects.

Now consider a cogenerator $E$ such that each object of $\A$ embeds
into a coproduct of copies of $E$. We may assume that $E$ is injective
by replacing $E$ with its injective envelope. Let
$\kappa=\max(\aleph_0,\card\Hom(C,E))$ and choose a monomorphism
\[\p\colon \prod_{n\in\bbN }E^\kappa\lto \coprod_{i\in I}E.\] For each $m\in\bbN$ we apply
$\Hom(C_m,-)$ and obtain a monomorphism
\[\p_m\colon\prod_{n\in\bbN }(E_{\g_m})^\kappa\lto \coprod_{i\in
   I}E_{\g_m}\] since $X\mapsto X_{\g_m}$ preserves products and
 coproducts.  Then it follows from
 Lemma~\ref{le:chase} that for some $m\in\bbN$ the
 map $\p_m$ restricts to an embedding
\[\prod_{n\ge m }(E_{\g_m})^\kappa\lto (\coprod_{i\in J}E_{\g_\infty})\amalg(\coprod_{\text{finite}}E_{\g_m})\]
for some cofinite subset $J\subseteq I$, where
$E_{\g_\infty}=\bigcap_{n\geq 0} E_{\g_n}$. Comparing this with
$\p_{m+1}$ and passing to the quotient yields a commutative diagram
with exact rows
\[\begin{tikzcd}[column sep=small]
    0\arrow{r}&\prod\limits_{n\ge m }(E_{\g_{m+1}})^\kappa\arrow{r}\arrow{d}& \prod\limits_{n\ge m }(E_{\g_m})^\kappa\arrow{r}\arrow{d}&\prod\limits_{n\ge m }(E_{\bar\g_m})^\kappa\arrow{r}\arrow{d}&0\\
     0\arrow{r}& (\coprod\limits_{i\in  J}E_{\g_\infty})\amalg(\coprod\limits_{\text{finite}}E_{\g_{m+1}})\arrow{r}&
     (\coprod\limits_{i\in J}E_{\g_\infty})\amalg(\coprod\limits_{\text{finite}}E_{\g_m})  \arrow{r}&\coprod\limits_{\text{finite}}E_{\bar\g_m}\arrow{r}&0
   \end{tikzcd}\] where we use the fact that $E$ is injective.  The
 vertical map on the right is a monomorphism because it is a
 restriction of $\Hom(B_{m+1}/B_m,\p)$.  From the choice of $\kappa$
 it follows that $E_{\bar\g_m}=0$, cf.\ Lemma~\ref{le:groups-card}
 below.  Thus $C_m=C_{m+1}$ since $E$ cogenerates $\A$. We conclude
 that $C$ is noetherian.
\end{proof} 

\begin{lem}\label{le:groups-card}
Let $A$ be an abelian group with $\a=\card A$ and let
$\k\ge\max(\aleph_0,\a)$.
If there is a monomorphism $A^\k\to A^n$ for some $n\in\bbN$, then
$A=0$.
\end{lem}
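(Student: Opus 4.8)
The plan is to forget the group structure entirely and argue by a pure cardinality count. A monomorphism in $\Ab$ is injective on underlying sets, so it suffices to show that $\card(A^\k)>\card(A^n)$ whenever $A\neq 0$; this alone contradicts the existence of the given monomorphism. I would therefore argue by contradiction, assuming $A\neq 0$, so that $\a=\card A\geq 2$.

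First I would evaluate the cardinality of the source. Since $2\leq\a\leq\k$ and $\k$ is infinite, the estimate $2^\k\leq\a^\k\leq\k^\k=2^\k$ forces $\card(A^\k)=\a^\k=2^\k$, where the identity $\k^\k=2^\k$ for infinite $\k$ follows from $\k^\k\leq(2^\k)^\k=2^{\k\cdot\k}=2^\k$. Next I would bound the target: for $n\geq 1$ one has $\card(A^n)=\a^n$, which is finite when $\a$ is finite and equals $\a$ when $\a$ is infinite, so in every case $\card(A^n)\leq\k$. (If $n=0$ then $A^n$ is trivial, whence $A^\k=0$ and $A=0$ directly, so this case needs no count.)

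Finally I would invoke Cantor's theorem in the form $2^\k>\k$ to obtain
\[\card(A^\k)=2^\k>\k\geq\card(A^n),\]
so that no injection $A^\k\to A^n$ can exist, contradicting the hypothesis; hence $A=0$. The argument uses nothing about the morphism beyond its injectivity on underlying sets, and its entire content is the cardinal identity $\a^\k=2^\k$ together with Cantor's inequality. There is no serious obstacle: the only point requiring attention is the uniform treatment of finite and infinite $A$, and this is exactly what the hypothesis $\k\geq\max(\aleph_0,\a)$ secures, since it guarantees both that $\k$ is infinite (so the squeeze applies) and that $\k$ dominates $\card(A^n)$.
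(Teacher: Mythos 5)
Your proof is correct and takes essentially the same route as the paper, which condenses the entire count into the single chain $\card(A^\k)=\a^\k\ge 2^\k>\k=\k^n\ge \a^n=\card(A^n)$ and likewise derives the contradiction from injectivity on underlying sets. Your only deviations---computing the exact value $\a^\k=2^\k$ where the one-sided bound $\a^\k\ge 2^\k$ suffices, and treating $n=0$ separately (a case the paper's equality $\k=\k^n$ tacitly excludes)---are harmless refinements of the same argument.
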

\begin{proof}
Suppose $A\neq 0$.  Then we have
\[\card(A^\k)=\a^\k\ge 2^\k>\k=\k^n\ge \a^n=\card(A^n).\]
This  contradicts the fact that there is an injective map  $A^\k\to
A^n$.
\end{proof}

\begin{rem}
  The paper of Roos \cite{Ro1968} formulates
  Theorem~\ref{th:loc-noeth} for Grothendieck categories satisfying
  Grothendieck's condition (AB6).
\end{rem}

We end this note with some further references. Huisgen-Zimmermann
provides in \cite{HZ2000} a detailed survey about pure-injective
modules, emphasising the role of Chase's lemma. For a more recent
treatment of Chase's lemma and its generalisations we refer to work of
Bergman \cite{Be2015}.

\subsubsection*{Acknowledgement} This note is a
contribution to the series `Selected Topics in Representation Theory'
at Bielefeld University. I am grateful to Dieter Vossieck for
carefully reading a preliminary version and for several
helpful comments.

\end{document}